\documentclass[11pt]{article}
\usepackage{amsmath,amsfonts,amsthm,amscd,amssymb,graphicx}

\numberwithin{equation}{section}


\setlength{\evensidemargin}{0in} \setlength{\oddsidemargin}{0in}
\setlength{\textwidth}{6in} \setlength{\topmargin}{0in}
\setlength{\textheight}{8in}

\newtheorem{theorem}{Theorem}[section]

\newtheorem{lemma}[theorem]{Lemma}

\newtheorem{proposition}[theorem]{Proposition}

\newtheorem{remark}[theorem]{Remark}

\def\bR  {\mathbb{R}}
\def\del{\partial }
\def \la {\langle_x}
\def \ra {\rangle}

\def\bR  {\mathbb{R}}

\def\del{\partial }
\def \la {\langle}
\def \ra {\rangle_x}

\def\thh{\theta_h}

\date{July 10, 2017}

\begin{document}

\title{Onsager's Conjecture for the Incompressible Euler Equations in  Bounded Domains}
\author{Claude Bardos\footnotemark[1]  \and Edriss S.\, Titi%
\footnotemark[2]   }
\maketitle

\begin{abstract}

The goal of this note is to show that, also in a bounded domain $\Omega \subset \bR^n$\,, with $\partial \Omega\in C^2$\,, any weak solution, $(u(x,t),p(x,t))$\,, of the Euler equations of ideal incompressible fluid in   $\Omega\times (0,T) \subset \bR^n\times\bR_t$\,, with the impermeability boundary condition: $u\cdot \vec n =0$ on $\del\Omega\times(0,T)$,  is of constant energy on the interval $(0,T)$ provided the velocity field $u \in L^3((0,T); C^{0,\alpha}(\overline{\Omega}))$, with $\alpha>\frac13\,.$


\end{abstract}
{\bf Keywords:} {Onsager's conjecture, Euler equations, conservation of energy, boundary effect.} \\
 {\bf MSC Subject Classifications:} {35Q31.}

\tableofcontents

\renewcommand{\thefootnote}{\fnsymbol{footnote}}

\footnotetext[1]{%
Laboratoire J.-L. Lions, BP187, 75252 Paris Cedex 05, France. Email:
claude.bardos@gmail.com}

\footnotetext[2]{%
Department of Mathematics,
                 Texas A\&M University, 3368 TAMU,
                 College Station, TX 77843-3368, USA. Also, Department of Computer Science and Applied Mathematics, The Weizmann Institute of Science, Rehovot 76100, ISRAEL. Email: titi@math.tamu.edu \, and \, edriss.titi@weizmann.ac.il}

\section{Introduction and preliminary  remarks}
The aim of  this article  is to prove the following:
\begin{theorem} \label{basic}
Let $\Omega\subset \bR^n$ be a bounded domain with  $C^2$  boundary, $\del\Omega$; and let  $(u(x,t),p(x,t))$ be a weak solution of the incompressible Euler equations in $\Omega\times(0,T)$, i.e.,
\begin{equation}
 u\in L^\infty((0,T),L^2(\Omega))\,, \quad \nabla \cdot u=0\quad  \hbox{in} \quad \Omega\times(0,T)\,,\,   \hbox{and}  \quad u\cdot \vec n =0 \quad \hbox{on} \quad \del \Omega\times(0,T)\,,
 \end{equation}
and for every test  vector field  $ \Psi (x,t)\in \mathcal D(\Omega \times (0,T)):$
\begin{equation}
\la u ,\del_t\Psi\ra + \la u\otimes u : \nabla \Psi\ra + \la p,\nabla \cdot \Psi\ra=0\label{Eulerdistribution}\,, \quad \hbox{in} \quad L^1(0,T)\,.\\\
  \end{equation}
Assume that
\begin{equation}
u\in L^3((0,T); C^{0,\alpha}(\overline{\Omega})),
\end{equation} with $\alpha > \frac13$, then the  energy conservation holds true, that is:
\begin{equation}
\|u(.,t_2)\|_{L^2(\Omega)}= \|u(., t_1)\|_{L^2(\Omega)}\,, \quad \hbox{for every} \quad t_1, t_2 \in (0,T)\,.
\end{equation}
\end{theorem}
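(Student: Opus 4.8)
The plan is to adapt the mollification argument of Constantin, E, and Titi to the bounded domain, the genuinely new difficulty being the treatment of the boundary layer. Fix a standard mollifier $\varphi_\varepsilon$ supported in the ball of radius $\varepsilon$, and for $x$ with $\mathrm{dist}(x,\partial\Omega)>\varepsilon$ set $u^\varepsilon=u*\varphi_\varepsilon$, which is well defined using only values of $u$ inside $\Omega$. Introduce a cutoff $\chi_\delta(x)=\eta(\mathrm{dist}(x,\partial\Omega)/\delta)$ with $\eta$ smooth, $\eta\equiv 0$ near $0$ and $\eta\equiv 1$ for arguments $\ge 2$, so that $\chi_\delta$ is supported in $\{\mathrm{dist}>\delta\}$ while $\nabla\chi_\delta$ is supported in the layer $\{\delta<\mathrm{dist}<2\delta\}$ with $|\nabla\chi_\delta|\lesssim\delta^{-1}$; I keep $\varepsilon<\delta$ so that $u^\varepsilon$ is defined on $\mathrm{supp}\,\chi_\delta$. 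Mollifying the weak formulation \eqref{Eulerdistribution} gives, in the interior, $\partial_t u^\varepsilon+\nabla\cdot(u\otimes u)^\varepsilon+\nabla p^\varepsilon=0$. Testing this against $\chi_\delta u^\varepsilon$ and integrating by parts produces a regularized energy identity for $\tfrac12\int_\Omega\chi_\delta|u^\varepsilon|^2\,dx$, whose right-hand side splits into an interior flux term and boundary-layer terms carrying a factor $\nabla\chi_\delta$. The strategy is to send $\varepsilon\to0$ for fixed $\delta$, then $\delta\to0$.

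For the interior flux I would invoke the Constantin--E--Titi commutator estimate: writing $(u\otimes u)^\varepsilon=u^\varepsilon\otimes u^\varepsilon+R^\varepsilon$, one has $\|R^\varepsilon\|_{L^\infty}\lesssim\varepsilon^{2\alpha}[u]_{C^{0,\alpha}}^2$ and $\|\nabla u^\varepsilon\|_{L^\infty}\lesssim\varepsilon^{\alpha-1}[u]_{C^{0,\alpha}}$. After integration by parts the interior flux $\int_\Omega\chi_\delta\,\nabla u^\varepsilon:R^\varepsilon\,dx$ is bounded by $C\varepsilon^{3\alpha-1}[u]_{C^{0,\alpha}}^3$; integrating in time against a test function $\theta(t)$ and using $u\in L^3((0,T);C^{0,\alpha})$ makes this integrable, and since $\alpha>\tfrac13$ the exponent $3\alpha-1$ is positive, so it vanishes as $\varepsilon\to0$. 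The commutator remainder $\int_\Omega(u^\varepsilon\otimes\nabla\chi_\delta):R^\varepsilon\,dx$ is $O(\varepsilon^{2\alpha})$ (the factor $\delta^{-1}$ being absorbed by the $O(\delta)$ measure of the layer) and likewise vanishes as $\varepsilon\to0$. The remaining part of the convective term, a boundary-layer term proportional to $\int_\Omega|u^\varepsilon|^2(\nabla\chi_\delta\cdot u^\varepsilon)\,dx$, and the pressure term survive and are treated next.

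The crux is the boundary layer. After the $\varepsilon\to0$ limit the surviving contributions are $\int_\Omega p\,(\nabla\chi_\delta\cdot u)\,dx$ and a term proportional to $\int_\Omega|u|^2(\nabla\chi_\delta\cdot u)\,dx$, both supported in $\{\delta<\mathrm{dist}<2\delta\}$. I would extend the unit normal to a smooth field $N$ near $\partial\Omega$; since $\nabla(\mathrm{dist})$ is parallel to $N$ there, $\nabla\chi_\delta\cdot u=-\delta^{-1}\eta'\,(N\cdot u)$. The impermeability condition $u\cdot N=0$ on $\partial\Omega$ together with $u\in C^{0,\alpha}(\overline\Omega)$ gives $|N\cdot u(x)|\le[u\cdot N]_{C^{0,\alpha}}\,\mathrm{dist}(x,\partial\Omega)^\alpha\lesssim\delta^\alpha$ on the layer, hence $|\nabla\chi_\delta\cdot u|\lesssim\delta^{\alpha-1}$ there. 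Integrating over the layer, whose measure is $O(\delta)$, the cubic term is $\lesssim\delta^\alpha\,\|u(t)\|_{C^{0,\alpha}}^3\to0$ and the pressure term is $\lesssim\delta^\alpha\,\|u(t)\|_{C^{0,\alpha}}\,\|p(t)\|_{L^\infty}\to0$, both integrable in $t$ by Hölder once $u\in L^3_tC^{0,\alpha}$ and $p\in L^{3/2}_tL^\infty$. I expect the pressure control to be the main obstacle: unlike the periodic case, where incompressibility makes the pressure term vanish identically, here one must place $p$ in a space bounded up to $\partial\Omega$. This I would obtain from the elliptic Neumann problem $-\Delta p=\nabla\cdot\nabla\cdot(u\otimes u)$ with $\partial_n p=-n\cdot(\nabla\cdot(u\otimes u))$ on $\partial\Omega$ (using $u\cdot n=0$), together with the boundedness of Calderón--Zygmund operators on Hölder spaces and elliptic regularity up to the boundary on the $C^2$ domain, which yields $p\in L^{3/2}((0,T);C^{0,\alpha}(\overline\Omega))$.

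Collecting these estimates, the regularized identity passes, first as $\varepsilon\to0$ and then as $\delta\to0$, to $\frac{d}{dt}\int_\Omega|u|^2\,dx=0$ in $\mathcal{D}'((0,T))$. This gives $\|u(\cdot,t_2)\|_{L^2(\Omega)}=\|u(\cdot,t_1)\|_{L^2(\Omega)}$ for every $t_1,t_2\in(0,T)$, which is the desired energy conservation.
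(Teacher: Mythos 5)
Your proof is correct, and it rests on the same three pillars as the paper's: the Constantin--E--Titi commutator estimate in the interior, the boundary-layer estimate $|u\cdot\nabla\chi_\delta|\le C\|u\|_{C^{0,\alpha}}\,\delta^{\alpha-1}$ obtained from impermeability plus H\"older continuity (the paper's Lemma \ref{bdycon}), and the H\"older bound up to the boundary for the pressure via the Neumann problem (the paper's Proposition \ref{pressure10}). The genuine difference is in the mechanics. The paper never mollifies the equation: it inserts the doubly regularized field $\Psi=\theta_h((\theta_h u)^\epsilon)^\epsilon$ as a test function in the weak formulation, so the mollifier inevitably acts on the product $\theta_h u$; increments of the cutoff then enter the commutator, producing cross terms of size $\epsilon/h$ (cf.\ Proposition \ref{essential}), and the two parameters must be coupled, $\epsilon=o(h^{2/(1+\alpha)})$, before letting $h\to0$. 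Your arrangement --- mollify the equation on the interior region where this is defined, and test with $\chi_\delta u^\epsilon$, leaving the cutoff unmollified --- keeps $\chi_\delta$ out of the commutator altogether, so the errors split cleanly into interior terms, $O(\epsilon^{3\alpha-1})$ and $O(\epsilon^{2\alpha})$ at fixed $\delta$, and layer terms, $O(\delta^{\alpha})$, with no mixing; the limits then decouple into the iterated $\epsilon\to0$ followed by $\delta\to0$, which is a real bookkeeping simplification. What the paper's version buys is that the whole argument is a single application of the weak formulation with an admissible test function (extended to $W^{1,3}((0,T);H^1_0(\Omega))$ via Proposition \ref{pressure10}), whereas you must separately justify pairing the mollified equation with $\chi_\delta u^\epsilon$ --- harmless here, since $\del_t u^\epsilon=-\nabla\cdot(u\otimes u)^\epsilon-\nabla p^\epsilon$ is an $L^{3/2}$-in-time function with smooth spatial values on $\mathrm{supp}\,\chi_\delta$, so the chain rule for $\frac{d}{dt}\frac12\int\chi_\delta|u^\epsilon|^2\,dx$ is standard. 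One point to patch: your conclusion is $\frac{d}{dt}\|u(t)\|_{L^2}^2=0$ in $\mathcal{D}'((0,T))$, which gives constancy only for a.e.\ $t$; to assert equality for every $t_1,t_2\in(0,T)$ you should pass to the weakly continuous (in $L^2$) representative of $u$, available because $\del_t u\in L^{3/2}((0,T);H^{-1}(\Omega))$ --- this is the role played in the paper by Proposition \ref{energy}, which evaluates $\|(\theta_h u)^\epsilon(t)\|_{L^2}$ at the endpoints directly.
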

In the above statement   $\la \cdot, \cdot \ra$ denotes the distributional duality with respect to the spatial variable $x$. For the justification of the weak formulation, as it is stated in the above theorem, see, e.g., Lions and Magenes \cite{Lions-Magenes} page 8, and Schwartz \cite{Schwartz}.

Notably, this theorem implies that to dissipate energy a weak solution of Euler equations must not be in the space more regular than $L^3((0,T); C^{0,\alpha}(\overline{\Omega}))$\,, with $\alpha>\frac 13\,.$   Such fact was observed, with a formal proof by Onsager in 1949 \cite{ON}. Hence it carries the name of Onsager conjecture.  In the absence of physical boundary (i.e., in the whole space $\Omega =\bR^d$ or for the case of periodic boundary conditions  in the torus $\Omega= \mathbb{T}^n$) this conjecture was proven in 1994 by Constantin, E and Titi \cite{CET}, after a first preliminary result of  Eyink \cite{GEY} (see also \cite{Cheskidov-etal}).
Moreover, the relevance of this issue has been underlined  by a series of contributions
(cf.\, Isett \cite{Isett}, Buckmaster, De Lellis ,  Sz\'ekelyhidi and  Vicol  \cite{BCDSV} and references therein) where  weak solutions,   $u\in C^{0,\alpha} ((0,T) ; (\mathbb{T}^n)$, with $\alpha <\frac 13$, that  dissipate energy were constructed.  These  results are concerning the problem in domains without physical  boundaries. However, due to the well recognized  dominant role of the boundary in the generation of turbulence (cf.\, \cite{BT} and references therein) it seems very reasonable  to investigate the analogue of the Onsager conjecture in bounded domains. Eventually, the  need to localize in order to deal with the boundary effect, as it will be shown below,  stimulates the construction of a direct proof which may have further applications.

The  proof of the theorem will consist of several fundamental steps presented in the following propositions.

\begin{proposition} \label{pressure10}
Under the assumptions of Theorem \ref{basic} the pair $(u,p)$ satisfies the following regularity properties
\begin{subequations}
\begin{equation}
u\otimes u\in L^3((0,T); L^2(\Omega) )\,, \quad p\in L^{\frac 32} ((0,T); C^{0,\alpha} (\overline{\Omega}) )\,, \label{holder}
\end{equation}
\begin{equation}
\del_t u =-\nabla \cdot(u\otimes u) -\nabla p \in L^{\frac32} ((0,T) ;H^{-1}(\Omega) )\,.\label{holder2}
\end{equation}
\end{subequations}
\end{proposition}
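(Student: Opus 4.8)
The plan is to establish the three stated regularity properties in sequence, each building on the previous one.

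First I would prove that $u \otimes u \in L^3((0,T); L^2(\Omega))$. This is the most elementary step and follows directly from the hypothesis $u \in L^3((0,T); C^{0,\alpha}(\overline{\Omega}))$. Indeed, since $\overline{\Omega}$ is bounded and $C^{0,\alpha}(\overline{\Omega}) \hookrightarrow L^\infty(\Omega)$, the pointwise bound $|u \otimes u| \le |u|^2$ gives $\|u \otimes u\|_{L^2(\Omega)} \le |\Omega|^{1/2} \|u\|_{L^\infty(\Omega)}^2 \le C \|u(\cdot,t)\|_{C^{0,\alpha}(\overline{\Omega})}^2$ for a.e.\ $t$. Taking the $L^{3/2}$ norm in time of the right-hand side, which is finite precisely because $u \in L^3$ in time, would yield $u \otimes u \in L^{3/2}((0,T); L^2(\Omega))$; since $L^3 \subset L^{3/2}$ on the finite interval $(0,T)$, the claimed membership in $L^3((0,T); L^2(\Omega))$ requires only the $L^3$-in-time integrability of $\|u\|_{L^\infty}^2$, which again follows from $u \in L^3((0,T); C^{0,\alpha})$ after observing $\|u\|_{L^\infty}^2 \le \|u\|_{C^{0,\alpha}}^2$ and integrating.

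The genuinely substantive step, which I expect to be the main obstacle, is the pressure estimate $p \in L^{3/2}((0,T); C^{0,\alpha}(\overline{\Omega}))$. Taking the divergence of the Euler equation formally yields the elliptic problem $-\Delta p = \nabla \cdot \nabla \cdot (u \otimes u) = \sum_{i,j} \partial_i \partial_j (u_i u_j)$ in $\Omega$. In the whole-space or periodic setting one would simply invoke Calder\'on--Zygmund or Schauder theory for the Laplacian, but in a bounded domain one must supply a boundary condition for $p$, and this is where the impermeability condition $u \cdot \vec n = 0$ enters. The correct Neumann-type boundary condition is obtained by taking the normal trace of the momentum equation on $\partial\Omega$; this produces a boundary datum involving $u \otimes u$ and the geometry (curvature) of $\partial\Omega$, for which one needs $\partial\Omega \in C^2$. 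The plan is to set up the Neumann problem for $p$, then apply elliptic Schauder estimates in H\"older spaces to conclude that the spatial $C^{0,\alpha}$ norm of $p$ is controlled by $\|u \otimes u\|_{C^{0,\alpha}}$, hence by $\|u\|_{C^{0,\alpha}}^2$. The delicate points are justifying the boundary condition rigorously at the weak-solution level and ensuring the Schauder constants are uniform so that taking the $L^{3/2}$ norm in time (again using $\|u\|_{C^{0,\alpha}}^2 \in L^{3/2}(0,T)$) gives the claim.

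Finally, with the two bounds above in hand, the assertion $\partial_t u = -\nabla \cdot (u \otimes u) - \nabla p \in L^{3/2}((0,T); H^{-1}(\Omega))$ follows by reading off the weak formulation \eqref{Eulerdistribution} as an identity in the sense of distributions and estimating each term. The map $v \mapsto \nabla \cdot v$ sends $L^2(\Omega)$ into $H^{-1}(\Omega)$ boundedly, so $\nabla \cdot (u \otimes u) \in L^{3/2}((0,T); H^{-1}(\Omega))$ by the first step, and similarly $\nabla: C^{0,\alpha}(\overline{\Omega}) \to H^{-1}(\Omega)$ (indeed even $\nabla: L^2 \to H^{-1}$) is bounded, so $\nabla p \in L^{3/2}((0,T); H^{-1}(\Omega))$ by the pressure estimate. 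Summing the two and identifying the result with $\partial_t u$ via the distributional equation completes the proof; this last step is essentially bookkeeping once the pressure regularity is secured.
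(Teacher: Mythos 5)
Your second and third steps follow the paper's own route: the paper likewise obtains the pressure regularity by writing the Neumann problem $-\Delta p=\sum_{i,j}\partial_{x_i}\partial_{x_j}(u_iu_j)$ in $\Omega$, with $\partial p/\partial\vec n$ on $\partial\Omega$ coming from the normal trace of the momentum equation (this is where $u\cdot\vec n=0$ and $\partial\Omega\in C^2$ enter), and then invokes elliptic Schauder theory in H\"older spaces to get $\|p(\cdot,t)\|_{C^{0,\alpha}}\le C\|u(\cdot,t)\|_{C^{0,\alpha}}^2$, hence $p\in L^{3/2}((0,T);C^{0,\alpha}(\overline{\Omega}))$; and the identification of $\partial_t u$ in $L^{3/2}((0,T);H^{-1}(\Omega))$ is the same bookkeeping in both treatments.

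However, your first step has a genuine gap. The estimate $\|u\otimes u\|_{L^2(\Omega)}\le C\|u(\cdot,t)\|_{C^{0,\alpha}}^2$ makes the time dependence quadratic in $\|u(\cdot,t)\|_{C^{0,\alpha}}$, and since the hypothesis only gives $\|u(\cdot,t)\|_{C^{0,\alpha}}\in L^3(0,T)$, you obtain $u\otimes u\in L^{3/2}((0,T);L^2(\Omega))$ and nothing more. Your attempted upgrade to $L^3$ in time is circular: it requires $\|u\|_{L^\infty}^2\in L^3(0,T)$, i.e.\ $\|u\|_{L^\infty}\in L^6(0,T)$, which does \emph{not} follow from $\|u\|_{L^\infty}\le\|u\|_{C^{0,\alpha}}\in L^3(0,T)$ (take $\|u(\cdot,t)\|_{C^{0,\alpha}}\sim t^{-1/4}$ near $t=0$: this is in $L^3$ but not in $L^6$). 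The inclusion $L^3\subset L^{3/2}$ you cite goes the wrong way, since the proposition asserts the stronger $L^3$ membership. The missing ingredient is the other half of the definition of a weak solution, $u\in L^\infty((0,T);L^2(\Omega))$, which your proposal never uses: putting one factor in $L^\infty_x$ and the other in $L^2_x$,
\begin{equation*}
\|u\otimes u\|_{L^2(\Omega)}\le \|u\|_{L^\infty(\Omega)}\|u\|_{L^2(\Omega)}\le C\,\|u(\cdot,t)\|_{C^{0,\alpha}}\,\|u\|_{L^\infty((0,T);L^2(\Omega))}\,,
\end{equation*}
so that the time dependence is only \emph{linear} in $\|u(\cdot,t)\|_{C^{0,\alpha}}\in L^3(0,T)$, which yields the claimed $L^3((0,T);L^2(\Omega))$ bound at once; this is exactly what the paper means when it calls the first part of (\ref{holder}) an immediate consequence of $u\in L^\infty((0,T);L^2(\Omega))\cap L^3((0,T);C^{0,\alpha}(\overline{\Omega}))$. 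With this replacement the rest of your proposal stands.
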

\begin{proof}
The first part of  (\ref{holder}) is an immediate consequence of the assumption that $u\in L^\infty((0,T);L^2(\Omega))\cap L^3((0,T); C^{0,\alpha}(\overline{\Omega}))$. For the second part  of (\ref{holder}) we first observe that, from the definition of weak solutions of the Euler equations,  the pressure, $p$, is a solution of the following elliptic boundary-value problem:
\begin{equation}
  -\Delta p=\sum_{i,j=1}^n \del_{x_i}\del_{x_j} (u_i u_j)\, \quad \hbox{in} \quad \Omega, \quad \hbox{and}  \quad \frac{\del p}{\del \vec{n}}=-\sum_{i,j=1}^n u_ju_j \del_{x_i}\vec n_j \quad \hbox{on}  \quad\del \Omega \label{holder3}\,.
\end{equation}
Observe that the boundary condition in  (\ref{holder3})  follows from simple  calculations for the case of classical solutions using the fact that $u\cdot \vec n =0$ (see, e.g., \cite{Temam}), which is considered here to be the suitable boundary condition in the definition of weak solution for the pressure.
Applying the classical theory of elliptic equations in H\"older spaces applied to (\ref{holder3}) (cf., e.g., \cite{Krylov} chapters 5 and 6) implies the estimate
\begin{equation}
\|p(.,t)|\|_{C^{0,\alpha}}\le C\|u(.,t)\|_{C^{0,\alpha}}^2\,, \label{pressure0}
\end{equation}
from which one infers the second part of (\ref{holder}).
Eventually, (\ref{holder2}) follows from (\ref{Eulerdistribution}) and  (\ref{holder})\,.  \end{proof}

To investigate the boundary effect one introduces the distance to the boundary:
\begin{equation*}
\hbox{for any}\, x \in \overline{\Omega}, \,\, d(x)=\inf_{y\in\del \Omega} |x-y|,\,\hbox{ and the open set} \quad  \Omega_h= \{ x\in \Omega|\, \hbox{such that} \, d(x)<h\}\,.
\end{equation*}
 Since $\del\Omega$ is assumed to be a  $C^2$ compact manifold,  there exists  $h_0(\Omega)>0$ with the following properties (for an explicit construction see, e.g., \cite{GuilleminSternberg} page 9):

 1 For any $x \in \overline{\Omega_{h_0}} $, the function $x\mapsto d(x)$ belongs to $C^1(\overline{\Omega_{h_0}})\,;$

 2 for any $x\in \overline{\Omega_{h_0}}$ there exists a unique point $\sigma(x)\in\del \Omega$ such that
 \begin{equation}\label{distance}
 d(x)= |x-\sigma(x)| \,\quad\hbox{and one has} \quad  \nabla d(x) =- \vec n(\sigma(x))\,.
 \end{equation}
Then one introduces  a $C^\infty(\bR)$    nondecreasing function $\eta: \bR\mapsto [0,1]$, with $\eta(s) =0$, for $s\in (-\infty,\frac{1}{2}]$, and $\eta(s) =1$, for $s\in [1,\infty)$\,.  For $h\in (0,h_0)$ the function $\theta_h(x)=\eta(\frac{d(x)}h)\,,$  is compactly supported $C^1(\Omega)$ function. We will also denote by $\thh$ its extension,  by zero, outside $\Omega$. Similarly, for any $w \in L^\infty (\Omega)$ the compactly supported function  $\thh w $  is well defined in $\Omega$, and its extension,  by zero outside $\Omega$, is also well defined over all $\bR^n$, and  will be also denoted by  $\thh w$.
Next, one has the following :
\begin{lemma} \label{bdycon} Let $h \in (0,h_0)$. For any vector field  $w\in C^{0,\alpha}(\overline \Omega)$, with $w \cdot \vec n =0$ on $\del\Omega\,,$  one has  the following estimates (with a constant $C$ independant of $h$, but might depend on $\Omega$):
\begin{subequations}
\begin{equation}
| w(x) \cdot \nabla \thh(x) |\le C\|w\|_{C^{0,\alpha}(\Omega)} h^{\alpha-1}  \label{orth0}, \quad \hbox{for every}\quad x\in \bR^n,
\end{equation}
\begin{equation}
  \quad \int_{\bR^n} |w(x) \cdot \nabla \thh(x)|dx \le C\|w\|_{C^{0,\alpha}(\Omega)} h^{\alpha} \label{orth}\,.
\end{equation}
\end{subequations}
\end{lemma}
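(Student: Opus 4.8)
The plan is to compute $\nabla \theta_h$ explicitly and exploit the geometric identity $\nabla d(x) = -\vec n(\sigma(x))$ from (\ref{distance}) to reduce the estimate to the normal component of $w$, which is exactly what the boundary condition controls. First I would apply the chain rule to $\theta_h(x) = \eta(d(x)/h)$, obtaining
\begin{equation*}
\nabla \theta_h(x) = \frac{1}{h}\,\eta'\!\left(\frac{d(x)}{h}\right)\nabla d(x) = -\frac{1}{h}\,\eta'\!\left(\frac{d(x)}{h}\right)\vec n(\sigma(x)),
\end{equation*}
so that
\begin{equation*}
w(x)\cdot \nabla \theta_h(x) = -\frac{1}{h}\,\eta'\!\left(\frac{d(x)}{h}\right)\, w(x)\cdot \vec n(\sigma(x)).
\end{equation*}
Since $\eta' \equiv 0$ outside $[\frac12,1]$, the right-hand side is supported in the collar $\{x : \frac h2 \le d(x) \le h\}$, and $|\eta'|$ is bounded by a fixed constant depending only on the choice of $\eta$.

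The key step is to convert the boundary condition into a pointwise bound on $w(x)\cdot \vec n(\sigma(x))$. Because $\sigma(x) \in \del\Omega$ and $w\cdot \vec n = 0$ there, we have $w(\sigma(x)) \cdot \vec n(\sigma(x)) = 0$, hence
\begin{equation*}
w(x)\cdot \vec n(\sigma(x)) = \bigl(w(x) - w(\sigma(x))\bigr)\cdot \vec n(\sigma(x)).
\end{equation*}
Using $|\vec n| = 1$, the H\"older continuity of $w$, and $|x - \sigma(x)| = d(x)$ from (\ref{distance}), this yields
\begin{equation*}
|w(x)\cdot \vec n(\sigma(x))| \le \|w\|_{C^{0,\alpha}(\Omega)}\, d(x)^\alpha.
\end{equation*}
On the support of $\nabla\theta_h$ one has $d(x) \le h$, so combining with the prefactor $h^{-1}$ and the bound on $\eta'$ produces the pointwise estimate (\ref{orth0}).

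For the integral bound (\ref{orth}), I would integrate the pointwise estimate over the support of $\nabla \theta_h$. The remaining ingredient is the measure of the collar region: since $\del\Omega$ is a compact $C^2$ manifold, the tubular neighborhood structure near the boundary (equivalently, the coarea formula applied to the Lipschitz function $d$) gives $|\{x : \frac h2 \le d(x) \le h\}| \le C h$ for $h \in (0, h_0)$, with $C$ depending only on $\Omega$. Multiplying the pointwise bound $C\|w\|_{C^{0,\alpha}(\Omega)}\, h^{\alpha-1}$ by this volume $Ch$ produces the claimed power $h^\alpha$. I expect the only delicate point to be this volume estimate, which relies on the regularity of $\del\Omega$ ensuring $d \in C^1$ near the boundary, so that the level sets $\{d = s\}$ foliate the collar with uniformly bounded area; the rest is a direct computation driven by the alignment of $\nabla d$ with the normal and the H\"older transfer of the boundary condition.
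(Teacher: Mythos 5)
Your proof is correct and follows essentially the same route as the paper: the chain rule plus the identity $\nabla d(x)=-\vec n(\sigma(x))$, subtracting $w(\sigma(x))$ to exploit the impermeability condition together with H\"older continuity for the pointwise bound, and then integrating over the collar of measure $O(h)$ to get the $h^\alpha$ bound. The only (harmless) additions beyond the paper's argument are your sharper observation that the support is $\{h/2\le d(x)\le h\}$ and the explicit coarea/tubular-neighborhood justification of the volume estimate, which the paper simply asserts as $|\Omega_h|\le Ch$.
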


%
\begin {proof} Observe that $w(x) \cdot \nabla \theta_h(x)=0$,  for every $x\in (\Omega_h)^c$. Moreover, for $x \in\Omega_h $, thanks to (\ref{distance}),  one has:
\begin{equation}
\nabla \theta_h(x)=-\frac 1h\eta'(\frac{d(x)}h) \vec n(\sigma(x))\,.
\end{equation}
Then for every $x\in \Omega_h $, we use the fact that  $w(\sigma(x))\cdot \vec n(\sigma(x)=0,$ to obtain:
\begin{equation}
 \begin{aligned}
&|w(x) \cdot \nabla \thh(x)| =\frac 1h\eta'(\frac{d(x)}h)|(w(x)-w( \sigma(x))\cdot \vec n(\sigma(x) |\\
&\quad \quad\quad\quad \quad\le\frac C h \|w\|_{C^{0,\alpha}}|x-\sigma(x)|^\alpha \le C\|w\|_{C^{0,\alpha}} h^{\alpha-1}\,.
 \end{aligned}
\end{equation}
Combining all the above we conclude (\ref{orth0}). Estimate (\ref{orth}) follows by integrating (\ref{orth0}) over $\bR^n$ taking into account the facts  that the support of  $\nabla\thh $ is a subset of $\overline{\Omega_h}$\,, and that $|\Omega_h| \le C h$.
\end {proof}
As in  \cite{CET}, we introduce a nonnegative radially symmetric $C^\infty (\bR^n)$ mollifier, $\phi(x) $, with support in $|x|\le 1$, and $\int_{\bR^n} \phi(x) dx=1$. Furthermore, for any $ \epsilon >0$, we denote by $\phi_\epsilon=\frac 1{\epsilon^n}\phi(\frac x\epsilon)$, and by  $v^\epsilon= v\star \phi_\epsilon$, for any $v\in\mathcal D'(\bR^n)$. Moreover, for $h\in (0,h_0)$, the distributions  $(\thh v)^\epsilon$  and  $((\thh v)^\epsilon)^\epsilon$  belong to $\mathcal D(\bR^n)\,;$ in addition,  they are compactly supported inside $\Omega,$	whenever  $\epsilon \in (0, \frac h 4)$.

\section{Fundamental steps toward proving energy conservation}
In this section we work under the assumptions of Theorem \ref{basic}, and we assume, all along,  that the regularization parameters $h$ and $\epsilon$ satisfy $h\in (0,h_0)$ and  $\epsilon \in (0,\frac h 4)$.  First observe that   by virtue of Proposition \ref{pressure10} equation (\ref {Eulerdistribution})  remains valid for test  vector field $\Psi \in W^{1,3}((0,T); H^1_0(\Omega))$. Therefore, we take  in (\ref {Eulerdistribution}) $\Psi = \thh((\thh u)^\epsilon)^\epsilon \in W^{1,3}((0,T); H^1_0(\Omega))$  to obtain:
\begin{equation}
   \la u ,\del_t(\thh((\thh u)^\epsilon)^\epsilon)\ra + \la u\otimes u :\nabla  (\thh((\thh u)^\epsilon)^\epsilon )\ra + \la p, \nabla \cdot  (\thh((\thh u)^\epsilon)^\epsilon) \ra  =0\label{Eulerdistribution2}\,,
 \end{equation}
 in $L^1(0,T)$\,.
 The last equation involves three terms:
\begin{equation}
\label{three-terms}
J_1=  \la u ,\del_t(\thh((\thh u)^\epsilon)^\epsilon)\ra
\,,\, J_2 =\la u\otimes u : \nabla( \thh ((\thh u)^\epsilon)^\epsilon) \ra\,\,, \hbox{ and}\,  J_3=  \la p ,  \nabla \cdot (\thh((\thh u)^\epsilon)^\epsilon )\ra \,.
\end{equation}

For the term $J_1$ we have the following:
\begin{proposition} \label{energy} Let  $u$ be as in Theorem \ref{basic}. Then for any $(t_1,t_2)\in (0,T)\,$ one has:
\begin{equation}
   \lim_{h \rightarrow  0}\int_{t_1}^{t_2}\la u, \del_t (\thh((\thh u)^\epsilon)^\epsilon)
\ra dt= \frac 12  \|u(t_2)\|^2_{L^2(\Omega)}- \frac 12  \|u(t_1)\|^2_{L^2(\Omega)}\label{energyf}
\end{equation}
\end{proposition}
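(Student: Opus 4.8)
The plan is to recognize $J_1$ as a perfect time derivative and then pass to the limit. Since $\theta_h$ does not depend on $t$ and the regularization $v\mapsto v^\epsilon$ is a spatial convolution, the time derivative commutes through both the cutoff and the two mollifications, so that $\partial_t(\theta_h((\theta_h u)^\epsilon)^\epsilon)=\theta_h(\theta_h\partial_t u)^{\epsilon\epsilon}$, where I write $v^{\epsilon\epsilon}:=(v^\epsilon)^\epsilon$. Now I would exploit two self-adjointness facts: multiplication by the real function $\theta_h$ is symmetric, and convolution with the even kernel $\phi_\epsilon$ satisfies $\langle f^\epsilon,g\rangle=\langle f,g^\epsilon\rangle$. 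Moving the $\theta_h$ and one mollifier onto the first slot of the pairing gives
\[
J_1=\langle u,\theta_h(\theta_h\partial_t u)^{\epsilon\epsilon}\rangle=\langle(\theta_h u)^{\epsilon\epsilon},\theta_h\partial_t u\rangle=\langle(\theta_h u)^\epsilon,\partial_t(\theta_h u)^\epsilon\rangle=\frac12\frac{d}{dt}\|(\theta_h u)^\epsilon\|_{L^2(\Omega)}^2,
\]
using in the last step again that $\partial_t$ commutes with the spatial mollifier. All pairings are legitimate because, by Proposition \ref{pressure10}, $\partial_t u\in L^{3/2}((0,T);H^{-1}(\Omega))$, while $(\theta_h u)^\epsilon$ is smooth and compactly supported in $\Omega$.

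Next I would integrate in time. For fixed $h$ and $\epsilon$, mollification and cutoff promote $\partial_t u\in L^{3/2}(H^{-1})$ to $\partial_t(\theta_h u)^\epsilon\in L^{3/2}((0,T);L^2(\Omega))$, whence $(\theta_h u)^\epsilon\in C([0,T];L^2(\Omega))$ and $t\mapsto\|(\theta_h u)^\epsilon(t)\|_{L^2}^2$ is absolutely continuous. The fundamental theorem of calculus then yields, for every $t_1,t_2\in(0,T)$,
\[
\int_{t_1}^{t_2}J_1\,dt=\frac12\|(\theta_h u)^\epsilon(t_2)\|_{L^2(\Omega)}^2-\frac12\|(\theta_h u)^\epsilon(t_1)\|_{L^2(\Omega)}^2,
\]
where $u(t)$ is understood in the weakly continuous $L^2$-representative furnished by $u\in L^\infty(L^2)$ together with $\partial_t u\in L^{3/2}(H^{-1})$, so that these endpoint values are well defined for \emph{every} $t$.

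It then remains to show $\|(\theta_h u)^\epsilon(t)-u(t)\|_{L^2(\Omega)}\to0$ as $h\to0$ (recall $\epsilon<h/4$, so $\epsilon\to0$ as well), at each fixed $t$. I would split $(\theta_h u)^\epsilon-u=\big((\theta_h u)^\epsilon-\theta_h u\big)+\big(\theta_h u-u\big)$; the second piece tends to $0$ by dominated convergence since $\theta_h\to1$ pointwise on $\Omega$ and $|\theta_h u|\le|u|\in L^2$. For the first piece I would write
\[
(\theta_h u)^\epsilon(x)-\theta_h(x)u(x)=\int\phi_\epsilon(y)\,\theta_h(x-y)\big[u(x-y)-u(x)\big]\,dy+\int\phi_\epsilon(y)\big[\theta_h(x-y)-\theta_h(x)\big]u(x)\,dy .
\]
The first integral is $O(\epsilon)$ in $L^2$ by continuity of translations (or $C\epsilon^\alpha\|u\|_{C^{0,\alpha}}$ at the good times), hence vanishes; the second, using $|\theta_h(x-y)-\theta_h(x)|\le C\epsilon/h$ for $|y|\le\epsilon$ and that it is supported in a boundary shell $S_h\subset\{\,h/2-\epsilon<d(x)<h+\epsilon\,\}$, is bounded in $L^2$ by $C(\epsilon/h)\|u\,\mathbf 1_{S_h}\|_{L^2}\le C\|u\,\mathbf 1_{S_h}\|_{L^2}$. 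Passing to the limit in the identity above would then give the stated formula.

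The one delicate point, and the place where the bounded domain genuinely intervenes, is this last boundary-layer term: on $S_h$ the gradient $\nabla\theta_h$ is of size $1/h$, so a naive mollification estimate only gives $O(\epsilon/h)=O(1)$ and does not decay. The gain comes from the interplay of the constraint $\epsilon<h/4$ with the $C^2$ geometry of $\partial\Omega$: the shell $S_h$ has width $O(h)$ and collapses onto $\partial\Omega$, so $\mathbf 1_{S_h}\to0$ almost everywhere and, by dominated convergence, $\|u\,\mathbf 1_{S_h}\|_{L^2}\to0$. I expect verifying precisely this boundary-shell estimate (rather than the algebraic reduction to a time derivative, which is routine) to be the main obstacle.
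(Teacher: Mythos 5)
Your proof is correct and takes essentially the same route as the paper: the same self-adjointness/duality reduction of $J_1$ to $\frac12\frac{d}{dt}\|(\theta_h u)^\epsilon\|_{L^2(\Omega)}^2$, integration in time, and then convergence $\|(\theta_h u)^\epsilon(t_i)\|_{L^2}\to\|u(t_i)\|_{L^2}$ as $h\to 0$ (with $\epsilon<h/4$), which is exactly what the paper compresses into its appeal to the Lebesgue Dominated Convergence Theorem. Your boundary-shell analysis and the use of the weakly continuous $L^2$ representative merely supply details the paper leaves implicit; the only slip is that translation continuity in $L^2$ gives $o(1)$ rather than $O(\epsilon)$ for the first integral, but vanishing is all you need there.
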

\begin{proof}
With the regularity estimates (\ref{holder}) and (\ref{holder2}) the duality between $L^3((0,T); H_0^1(\Omega))$ and $L^{\frac32}(0,T; H^{-1} (\Omega))$ gives:
 \begin{equation}
 \la u, \del_t(\thh((\thh u)^\epsilon)^\epsilon)
\ra= \la  (\thh  u)^\epsilon, \del_t (\thh u)^\epsilon \ra =\frac12\frac{d}{dt}\int_{\bR^n}|((\thh u)^\epsilon|^2dx\,, \quad \hbox{in}\quad L^1(0,T)\,,
\end{equation}
and the result follows, after integration in time,  from the Lebesgue Dominant Convergence Theorem and the fact that $\epsilon\in (0,\frac h4)$.
\end{proof}
For the  second term $J_2=   \la u\otimes u :\nabla(\thh ((\thh u)^\epsilon)^\epsilon) \ra  $ one has the  following:
\begin{proposition}\label{essential}
Let  $u$ be as in Theorem \ref{basic}. Then
  \begin{equation}
|J_2| = |  \la u\otimes u : \nabla(\thh ((\thh u)^\epsilon)^\epsilon) \ra | \le Ch^{\alpha}\|u\|_{C^{0,\alpha}}\|u\|_{L^\infty}^2 + C\|u\|_{C^{0,\alpha}}\epsilon^{\alpha-1}(\| u\|_{C^{0,\alpha}}\epsilon^\alpha +\| u\|_{L^\infty}\frac\epsilon h)^2 \label{essentialproof}
\end{equation}

\end{proposition}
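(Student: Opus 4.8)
The plan is to work at almost every fixed time $t\in(0,T)$, so that all norms below are the spatial norms appearing in (\ref{essentialproof}), and to abbreviate $w:=\thh u$ (extended by zero), so that the test field is $\Psi=\thh(w^\epsilon)^\epsilon$. Two structural facts drive the argument. First, $w$ fails to be divergence free only through the cutoff: since $\nabla\cdot u=0$ one has $\nabla\cdot w=u\cdot\nabla\thh$, and by the impermeability condition together with Lemma \ref{bdycon} this quantity is small in the sense that $\int_{\bR^n}|u\cdot\nabla\thh|\,dx\le C\|u\|_{C^{0,\alpha}}h^\alpha$. Second, I expand $\nabla\Psi=\nabla\thh\otimes(w^\epsilon)^\epsilon+\thh\,\nabla(w^\epsilon)^\epsilon$ by the Leibniz rule, splitting $J_2=J_2^{(1)}+J_2^{(2)}$ into a boundary piece carrying $\nabla\thh$ and a bulk piece carrying $\thh$.

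For $J_2^{(1)}=\int(u\cdot\nabla\thh)\,\big(u\cdot(w^\epsilon)^\epsilon\big)\,dx$ I would bound the second factor by $\|(w^\epsilon)^\epsilon\|_{L^\infty}\|u\|_{L^\infty}\le\|u\|_{L^\infty}^2$ (convolution and multiplication by $\thh\le 1$ do not increase the sup norm) and estimate the remaining integral directly by (\ref{orth}); this produces exactly the first summand $Ch^\alpha\|u\|_{C^{0,\alpha}}\|u\|_{L^\infty}^2$ of (\ref{essentialproof}). The bulk term is the heart of the matter: writing $\thh(u\otimes u)=w\otimes u$, and using the self-adjointness of the radially symmetric convolution $\phi_\epsilon$ to move one mollification onto the coefficient, I rewrite
\[
J_2^{(2)}=\int_{\bR^n}(w\otimes u):\nabla(w^\epsilon)^\epsilon\,dx=\int_{\bR^n}(w\otimes u)^\epsilon:\nabla w^\epsilon\,dx .
\]
This is precisely the Constantin--E--Titi energy flux, now built from the cut-off field $w$. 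Subtracting the non-commutator part $w^\epsilon\otimes u^\epsilon$ and invoking the commutator identity of \cite{CET}, the genuine commutator contribution $\int[(w\otimes u)^\epsilon-w^\epsilon\otimes u^\epsilon]:\nabla w^\epsilon\,dx$ is controlled by $\|(w\otimes u)^\epsilon-w^\epsilon\otimes u^\epsilon\|_{L^1}\,\|\nabla w^\epsilon\|_{L^\infty}$.

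Both factors are estimated through the increments at scale $\epsilon$. The product rule gives, for $|y|\le\epsilon$,
\[
|w(x-y)-w(x)|\le\|u\|_{C^{0,\alpha}}|y|^\alpha+\|u\|_{L^\infty}\|\nabla\thh\|_{L^\infty}|y|\le\|u\|_{C^{0,\alpha}}\epsilon^\alpha+C\|u\|_{L^\infty}\tfrac{\epsilon}{h},
\]
so the commutator is bounded in $L^1$ by $C\big(\|u\|_{C^{0,\alpha}}\epsilon^\alpha+\|u\|_{L^\infty}\tfrac{\epsilon}{h}\big)\cdot\|u\|_{C^{0,\alpha}}\epsilon^\alpha$, the clean factor $\|u\|_{C^{0,\alpha}}\epsilon^\alpha$ arising from the increment of $u$, which is globally H\"older on $\overline{\Omega}$; meanwhile the zero-mean identity $\nabla w^\epsilon=(\nabla\phi_\epsilon)\star w$ yields $\|\nabla w^\epsilon\|_{L^\infty}\le C\epsilon^{-1}\big(\|u\|_{C^{0,\alpha}}\epsilon^\alpha+\|u\|_{L^\infty}\tfrac{\epsilon}{h}\big)$. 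Multiplying these reproduces exactly the second summand $C\|u\|_{C^{0,\alpha}}\epsilon^{\alpha-1}\big(\|u\|_{C^{0,\alpha}}\epsilon^\alpha+\|u\|_{L^\infty}\tfrac{\epsilon}{h}\big)^2$ of (\ref{essentialproof}). The remaining non-commutator term $\int(w^\epsilon\otimes u^\epsilon):\nabla w^\epsilon\,dx$ would vanish identically if $w$ were divergence free and equal to $u$; here I would reorganize it by integration by parts and repeated use of $\nabla\cdot u=0$ into integrals proportional to $\nabla\cdot w^\epsilon=(u\cdot\nabla\thh)^\epsilon$ (bounded in $L^1$ by $Ch^\alpha\|u\|_{C^{0,\alpha}}$, hence of the first type) together with one further commutator of the second type.

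The main difficulty I anticipate is precisely this boundary bookkeeping. Because the mollification spreads the cutoff over a layer of width $\epsilon$, the gradient $\nabla w^\epsilon$ a priori contains a term of size $\|u\|_{L^\infty}/h$; the estimate survives only because this term always appears either inside an increment $w(x-y)-w(x)$, where it is multiplied by $|y|\le\epsilon$ and so produces the benign $\|u\|_{L^\infty}\epsilon/h$, or contracted against a factor carrying $u\cdot\nabla\thh$, which is proportional to $u\cdot\vec n$ on $\del\Omega$ and hence vanishes by impermeability. Verifying that every boundary-generated remainder carries such a factor, so that Lemma \ref{bdycon} applies and no uncompensated power of $1/h$ survives, is the crux; this is exactly the step where the condition $u\cdot\vec n=0$ is indispensable and where the analysis genuinely departs from the periodic case treated in \cite{CET}.
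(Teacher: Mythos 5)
Your splitting of $J_2$ into the boundary piece (with $\nabla\thh$) and the bulk piece (with $\thh$), your estimate of the boundary piece via Lemma \ref{bdycon}, and your commutator bound built from the increments $|\delta_y u|\le\|u\|_{C^{0,\alpha}}\epsilon^\alpha$, $|\delta_y w|\le \|u\|_{C^{0,\alpha}}\epsilon^\alpha+C\|u\|_{L^\infty}\frac\epsilon h$ and $\|\nabla w^\epsilon\|_{L^\infty}\le C\epsilon^{-1}(\epsilon^\alpha\|u\|_{C^{0,\alpha}}+\frac\epsilon h\|u\|_{L^\infty})$ all coincide with the paper's argument and are correct. The gap is in the non-commutator term. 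You subtract $w^\epsilon\otimes u^\epsilon$ with the convective (gradient-contracting) index on the cut-off field $w^\epsilon$ --- this is what your remark ``would vanish identically if $w$ were divergence free and equal to $u$'' commits you to --- so you are left with $N=\int (w^\epsilon\cdot\nabla)w^\epsilon\cdot u^\epsilon\,dx$, which you propose to kill ``by integration by parts and repeated use of $\nabla\cdot u=0$.'' This cannot work as described. One integration by parts gives
\[
N=-\int(\nabla\cdot w^\epsilon)\,(u^\epsilon\cdot w^\epsilon)\,dx-\int w_i^\epsilon\, w_j^\epsilon\,\partial_i u_j^\epsilon\,dx ,
\]
and while the first integral is indeed of your ``first type'' (via $\nabla\cdot w^\epsilon=(u\cdot\nabla\thh)^\epsilon$ and Lemma \ref{bdycon}), the second is neither proportional to $\nabla\cdot w^\epsilon$ nor a commutator: the derivative there is the full gradient of $u^\epsilon$, not its divergence, so $\nabla\cdot u=0$ never enters, and integrating by parts once more just reproduces $N$ --- the procedure is circular. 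A direct estimate of that integral using $\|\nabla u^\epsilon\|_{L^\infty}\le C\epsilon^{\alpha-1}\|u\|_{C^{0,\alpha}}$ on the support of $w^\epsilon$ gives only $C\epsilon^{\alpha-1}\|u\|_{C^{0,\alpha}}\|u\|_{L^\infty}^2$, which lacks the crucial squared small factor and is far too large. (One can in fact control $N$ by exploiting that $\thh u^\epsilon-(\thh u)^\epsilon$ is of size $\frac\epsilon h\|u\|_{L^\infty}$ and supported in a layer of measure $O(h)$, but this yields terms such as $\frac\epsilon h\|u\|_{L^\infty}^3$ and $\epsilon^\alpha\|u\|_{C^{0,\alpha}}\|u\|_{L^\infty}^2$; these do vanish under the final choice $\epsilon=o(h^{2/(1+\alpha)})$, but they are not bounded by the right-hand side of (\ref{essentialproof}) for all admissible $\epsilon$, so the Proposition as stated would not be proved.)

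The fix is exactly the paper's choice of orientation: since $(\thh u\otimes u)^\epsilon=(u\otimes\thh u)^\epsilon$ by symmetry of $u\otimes u$, you are free to subtract the transposed tensor $u^\epsilon\otimes(\thh u)^\epsilon$, keeping the divergence-free mollified full velocity $u^\epsilon$ in the convective slot. Then the subtracted term vanishes identically,
\[
\int_\Omega \bigl(u^\epsilon\otimes(\thh u)^\epsilon\bigr):\nabla(\thh u)^\epsilon\,dx
=\frac12\int_\Omega u^\epsilon\cdot\nabla\bigl|(\thh u)^\epsilon\bigr|^2\,dx=0 ,
\]
because $\nabla\cdot u^\epsilon=0$ on the support of $(\thh u)^\epsilon$, which is compactly contained in $\Omega$ thanks to $\epsilon<\frac h4$; no remainder $N$ is created, the bulk term equals the pure commutator $\int\bigl[(u\otimes\thh u)^\epsilon-u^\epsilon\otimes(\thh u)^\epsilon\bigr]:\nabla(\thh u)^\epsilon\,dx$, and your own increment estimates then finish the proof. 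The whole point is that $u^\epsilon\otimes(\thh u)^\epsilon$ and $(\thh u)^\epsilon\otimes u^\epsilon$ are different tensors, and only the first choice dies when contracted against $\nabla(\thh u)^\epsilon$.
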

\begin{proof}
One writes $J_2=J_{21}+J_{22}$ with
\begin{equation}
\begin{aligned}
&J_{21}= \la u\otimes u : (\nabla \thh) \otimes((\thh u)^\epsilon)^\epsilon\ra \\
&J_{22}= \la u\otimes u: \thh \nabla (((\thh u)^\epsilon)^\epsilon)\ra
\end{aligned}
\end{equation}
To estimate  the term $J_{21}$ one uses  Lemma \ref{bdycon} to obtain
\begin{equation}\label{impcon}
\begin{aligned}
|J_{21}|= |\la u\otimes u :  (\nabla \thh )\otimes ((\thh u)^\epsilon))^\epsilon \ra|
& = |\int_{\Omega_h}   ( u \cdot \nabla \thh(x)  )( u(x) \cdot ((\thh u)^\epsilon)^\epsilon) dx \\\
&\le Ch^{\alpha}\|u\|_{C^{0,\alpha}}\|u\|_{L^\infty}^2\,.
\end{aligned}
\end{equation}
Next, we  turn into estimating the term $J_{22}$.   First we observe that since  $u^\epsilon(x)$ is a divergence free smooth vector field for every $x\in \hbox{supp}\,(\thh u)^\epsilon \subset\subset \Omega$, therefore,   one has:
\begin{equation}
\la  (u^\epsilon \otimes (\thh u)^\epsilon): \nabla (\thh u)^\epsilon\ra= \int_\Omega (u^\epsilon \cdot \nabla (\thh u)^\epsilon ) \cdot (\thh u)^\epsilon\, dx  = 0.
\end{equation}
Consequently, one has the following estimate for $J_{22}$:
\begin{equation}
\begin {aligned}
|J_{22}| = & | \la u\otimes u : \thh \nabla ((\thh u)^\epsilon)^\epsilon\ra |
 = | \la (u\otimes \thh u) : \nabla ((\thh u)^\epsilon)^\epsilon \ra | = \\
& |\la (u\otimes \thh u)^\epsilon : \nabla (\thh u)^\epsilon \ra |  =
|\la\Big( (u\otimes \thh u)^\epsilon -(u^\epsilon \otimes (\thh u)^\epsilon)\Big):  \nabla (\thh u)^\epsilon \ra |.
\end{aligned}
\end{equation}
To treat the term
$$
\la\Big( (u\otimes \thh u)^\epsilon -(u^\epsilon \otimes (\thh u)^\epsilon)\Big) : \nabla (\thh u)^\epsilon\ra
$$
one uses similar computations to those in \cite{CET}  (cf. Remark \ref{reynold} below) which relate $ (u\otimes \thh u)^\epsilon$ to $(u^\epsilon \otimes (\thh u)^\epsilon)$. More precisely, for any two distributions, $v,w \in \mathcal{D}'(\bR^n)$, one has the following identity:
\begin{equation}
\begin{aligned}
&(v\otimes w)^\epsilon(x)-(v^\epsilon\otimes w^\epsilon)(x) = {\int_{\bR^n_y}( \delta_y v \otimes \delta _y w)(x) \phi_\epsilon(y)dy} + (v-v^\epsilon)(x)\otimes(w-w^\epsilon) (x)  \\
&\hbox{ with}\quad  (\delta_y v) (x) =v(x-y)-v(x)\,,\quad  \hbox{and} \quad (\delta_y w) (x) =w(x-y)-w(x) \,.
\end{aligned}
\end{equation}
Hence $J_{22}= J_{221} + J_{222} $ with:
\begin{equation}
\begin{aligned}
J_{221}&=\int_{\bR^n_x}\Big((\int_{\bR^n_y} (\delta_y u\otimes \delta _y (\thh u))(x)\phi_\epsilon(y)dy): ( \int_{\bR^n_z} \nabla \phi_\epsilon(z)\otimes(\thh u)(x-z)dz)\Big) dx \\
&=\int_{\Omega}\Big((\int_{\bR^n_y} (\delta_y u\otimes \delta _y (\thh u))(x)\phi_\epsilon(y)dy): ( \int_{\bR^n_z} \nabla \phi_\epsilon(z)\otimes(\thh u)(x-z)dz)\Big) dx
\end{aligned}
\end{equation}
and
\begin{equation}
\begin{aligned}
J_{222}&=\int_{\bR^n_x}\Big(((u-u^\epsilon) \otimes ((\thh u) -(\thh u)^\epsilon)): \nabla(\thh u)^\epsilon \Big)dx\\
&=\int_{\Omega}\Big( ((u-u^\epsilon) \otimes ((\thh u) -(\thh u)^\epsilon)): \nabla(\thh u)^\epsilon \Big)dx
\end{aligned}
\end{equation}
To estimate  $J_{221}$, first, one uses the facts that for every $|y| \le \epsilon$ one has $|(\delta_y \thh)(x)| \le C \frac{\epsilon}{h}$, and that the  $\hbox{supp} \, \phi_\epsilon \subset \{y| \,\, |y|\le \epsilon\}$,  together with the  $C^{0,\alpha}$ regularity of $u$ to obtain that:
\begin{equation}
\begin{aligned}
 &|\int_{\bR^n_y} (\delta_y u\otimes \delta _y (\thh u))(x)\phi_\epsilon(y)dy)|=
 |\int_{\bR^n_y} (\delta_y u) (x)\otimes (\thh (x-y)(\delta_y u)(x)+ (\delta_y\thh )(x) u(x-y))\phi_\epsilon(y)dy|\\
 &\le  C \epsilon^\alpha \|u\|_{C^{0,\alpha}} \int_{\bR^n_y}( \epsilon^\alpha \|u\|_{C^{0,\alpha}}+ \frac\epsilon h \|u\|_{L^\infty}) \phi_\epsilon(y)dy = C \epsilon^\alpha \|u\|_{C^{0,\alpha}} ( \epsilon^\alpha \|u\|_{C^{0,\alpha}}+ \frac\epsilon h \|u\|_{L^\infty})\,.
\end{aligned}
\end{equation}
Second,
\begin{equation}\label{grad_theta_u}
\begin{aligned}
&|\int_{\bR^n_z}\Big( \nabla \phi_\epsilon(z)\otimes(\thh u)(x-z)\Big)dz|=|\int_{\bR^n_z} \Big(\nabla \phi_\epsilon(z)\otimes((\thh u)(x-z)-(\thh u)(x))\Big)dz|\\
&=|\int_{\bR^n_z} \Big(\nabla \phi_\epsilon(z)\otimes(\delta_z\thh (x)u(x-z)+\thh (x)\delta_zu(x))\Big) dz|\\
&\le  C(\frac\epsilon h\|u\|_{L^\infty} + \epsilon^\alpha \|u\|_{C^{0,\alpha}})\int_{\bR^n_z}|\nabla\phi_\epsilon(z)|dz\le C\epsilon^{-1}(\frac\epsilon h\|u\|_{L^\infty} + \epsilon^\alpha \|u\|_{C^{0,\alpha}})\,,
\end{aligned}
\end{equation}
where in the last inequality we used the fact that $\int_{\bR^n_z}|\nabla\phi_\epsilon(z)|dz \le C \epsilon^{-1}$.
Hence from all the above one has:
\begin{equation}
 |J_{221}|\le C\epsilon^{\alpha-1}\|u\|_{C^{0,\alpha}} (\frac\epsilon h\|u\|_{L^\infty} + \epsilon^\alpha \|u\|_{C^{0,\alpha}})^2\,.
\end{equation}
To complete the proof of the Proposition \ref{essential},  it remains to estimate the term:
\begin{equation}
\begin{aligned}
J_{222}&=\int_{\bR^n_x}\Big( \Big((u-u^\epsilon) \otimes ((\thh u) -(\thh u)^\epsilon)\Big): \nabla(\thh u)^\epsilon \Big)dx\\
&=\int_{\Omega}\Big(\Big((u-u^\epsilon) \otimes ((\thh u) -(\thh u)^\epsilon)\Big): \nabla(\thh u)^\epsilon \Big) dx
\end{aligned}
\end{equation}
First, as in (\ref{grad_theta_u}) one has:
\begin{equation}
|\nabla(\thh u)^\epsilon(x)| \le C\epsilon^{-1}(\frac\epsilon h\|u\|_{L^\infty} + \epsilon^\alpha \|u\|_{C^{0,\alpha}})\,.
\end{equation}
Moreover, following similar arguments as in the above estimates for $J_{221}$ one can show that for every $x \in \hbox{supp}\, \thh$ one has
\begin{equation}
|(u-u^\epsilon)(x)|\le  \epsilon^\alpha \|u\|_{C^{0,\alpha}}  \quad\hbox{and} \quad
 |(\thh u)(x)-(\thh u)^\epsilon(x)|\le C( \epsilon^\alpha \|u\|_{C^{0,\alpha}}+ \frac\epsilon h\|u\|_{L^\infty})\,.
\end{equation}
Summing up, one has the following estimate for:
\begin{equation}
|J_{222} |\le C\epsilon^{\alpha-1} \|u\|_{C^{0,\alpha}} (\frac\epsilon h\|u\|_{L^\infty} + \epsilon^\alpha \|u\|_{C^{0,\alpha}})^2\,.
\end{equation}
Collecting the estimates on $J_2$ from $J_{21}$ and $J_{22}$ one obtains  (\ref{essentialproof}).
\end{proof}
Eventually,  the introduction of the localized cutoff-function $\thh$ affects the divergence free property of the velocity field, $u$, of the solution $(u,p)$. Therefore, to estimate the term $J_3$ in (\ref{three-terms}), which involves the pressure, $p$,  one needs the following:
\begin{proposition}\label{pressure1} Let $h\in (0,h_0)$ and $\epsilon \in (0,\frac{h}{4})$. Suppose $(u,p)$ is a  weak solution of the Euler equations  with    $u\in L^3((0,T);C^{0,\alpha}(\overline{\Omega}))$. Then one has the following estimate:
\begin{equation}
|\la p, \nabla\cdot (\thh((\thh u)^\epsilon)^\epsilon)\ra|  \le C \| u(t)\|^3_{C^{0,\alpha}}(h^\alpha+\epsilon^\alpha)\,.
\end{equation}
\end{proposition}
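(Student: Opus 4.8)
The plan is to compute the divergence $\nabla\cdot\Psi$ of the test field $\Psi=\thh((\thh u)^\epsilon)^\epsilon$ explicitly, isolate the piece created by the cut-off, and then pair each piece against $p$ using only the bound $\|p(\cdot,t)\|_{L^\infty}\le\|p(\cdot,t)\|_{C^{0,\alpha}}\le C\|u(\cdot,t)\|_{C^{0,\alpha}}^2$ supplied by \eqref{pressure0}. Since $\Psi$ is smooth and compactly supported in $\Omega$, we have $\Psi\in H^1_0(\Omega)$ and $\nabla\cdot\Psi\in L^2(\Omega)$, while $p(\cdot,t)\in C^{0,\alpha}(\overline{\Omega})\subset L^2(\Omega)$, so the duality $J_3$ in the statement is simply $\int_\Omega p\,\nabla\cdot\Psi\,dx$. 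Writing $F:=((\thh u)^\epsilon)^\epsilon$ one has $\nabla\cdot\Psi=\nabla\thh\cdot F+\thh\,\nabla\cdot F$. The key algebraic observation is that $\nabla\cdot u=0$ forces $\nabla\cdot(\thh u)=\nabla\thh\cdot u=:g_h$, and since mollification commutes with the divergence, $\nabla\cdot F=(g_h^\epsilon)^\epsilon$; thus the ``divergence defect'' produced by the localization is governed entirely by the field $g_h=\nabla\thh\cdot u$ already controlled in Lemma \ref{bdycon}. I would therefore split $J_3=J_{31}+J_{32}$ with $J_{31}=\int_\Omega p\,\nabla\thh\cdot F\,dx$ and $J_{32}=\int_\Omega p\,\thh\,(g_h^\epsilon)^\epsilon\,dx$.

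The term $J_{32}$ is routine: by H\"older's inequality and Young's inequality for convolutions, $|J_{32}|\le\|p\|_{L^\infty}\|(g_h^\epsilon)^\epsilon\|_{L^1}\le\|p\|_{L^\infty}\|g_h\|_{L^1}$, and estimate \eqref{orth} of Lemma \ref{bdycon} gives $\|g_h\|_{L^1}\le C\|u\|_{C^{0,\alpha}}h^\alpha$. Combined with \eqref{pressure0} this yields $|J_{32}|\le C\|u\|_{C^{0,\alpha}}^3\,h^\alpha$.

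The whole difficulty sits in $J_{31}$, and here I would reuse the geometry behind Lemma \ref{bdycon}. On the support of $\nabla\thh$, namely the shell $\{h/2<d(x)<h\}$, one has $\nabla\thh(x)=-\frac1h\eta'(\tfrac{d(x)}h)\vec n(\sigma(x))$, so only the \emph{normal} component $\vec n(\sigma(x))\cdot F(x)$ enters. Representing $F$ as the double convolution $F(x)=\int\!\!\int\phi_\epsilon(y)\phi_\epsilon(z)(\thh u)(x-y-z)\,dy\,dz$ and using the impermeability condition in the form $u(\sigma(x))\cdot\vec n(\sigma(x))=0$, I would write $\vec n(\sigma(x))\cdot u(x-y-z)=\vec n(\sigma(x))\cdot\big(u(x-y-z)-u(\sigma(x))\big)$. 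For $x$ in the shell and $|y|,|z|\le\epsilon<h/4$ the point $x-y-z$ stays within distance $d(x)+2\epsilon\le\tfrac{3h}2$ of $\sigma(x)$, so the H\"older bound gives $|\vec n(\sigma(x))\cdot u(x-y-z)|\le C\|u\|_{C^{0,\alpha}}h^\alpha$; since $0\le\thh\le1$ and $\phi_\epsilon$ has unit mass, this integrates to $|\vec n(\sigma(x))\cdot F(x)|\le C\|u\|_{C^{0,\alpha}}h^\alpha$. Finally, because $|\nabla\thh|\le C/h$ and $|\Omega_h|\le Ch$,
\[
|J_{31}|\le \|p\|_{L^\infty}\,\frac{C}{h}\,\big(C\|u\|_{C^{0,\alpha}}h^\alpha\big)\,|\Omega_h|\le C\|u\|_{C^{0,\alpha}}^3\,h^\alpha .
\]
Adding the two contributions gives $|J_3|\le C\|u\|_{C^{0,\alpha}}^3 h^\alpha\le C\|u\|_{C^{0,\alpha}}^3(h^\alpha+\epsilon^\alpha)$, the last step being free since $\epsilon<h/4$. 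The main obstacle is precisely this normal-component estimate for the mollified field $F$: one must transfer the boundary condition $u\cdot\vec n=0$ through a double mollification, and the constraint $\epsilon<h/4$ is exactly what keeps the convolution from moving mass out of the band of width $\sim h$ on which the H\"older distance to $\sigma(x)$ remains $O(h)$. A naive bound that treats $\nabla\thh\cdot F$ as $O(1/h)\times O(1)$ on a set of measure $O(h)$ loses this gain and fails; retaining the normal projection is essential.
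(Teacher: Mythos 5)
Your proposal is correct, and its skeleton coincides with the paper's: the same splitting of $\la p,\nabla\cdot(\thh((\thh u)^\epsilon)^\epsilon)\ra$ into the piece carrying $\thh\,\nabla\cdot((\thh u)^\epsilon)^\epsilon$ and the piece carrying $\nabla\thh\cdot((\thh u)^\epsilon)^\epsilon$, the same use of the pressure bound \eqref{pressure0}, and the same identification $\nabla\cdot(\thh u)=u\cdot\nabla\thh$ (via $\nabla\cdot u=0$) so that the first piece is controlled by Lemma \ref{bdycon}, estimate \eqref{orth}, after Young's inequality. Where you genuinely diverge is in the boundary-layer term $\int_\Omega p\,\nabla\thh\cdot F\,dx$: the paper adds and subtracts $u(x)$ under the double convolution, bounding the difference $u(x-y+z)-u(x)$ by $C\|u\|_{C^{0,\alpha}}\epsilon^\alpha$ and then applying Lemma \ref{bdycon} to the remaining term $u(x)\cdot\nabla\thh(x)$; this is why the stated bound carries $h^\alpha+\epsilon^\alpha$. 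You instead re-run the geometric argument of Lemma \ref{bdycon} directly on the mollified field, writing $\vec n(\sigma(x))\cdot u(x-y-z)=\vec n(\sigma(x))\cdot\bigl(u(x-y-z)-u(\sigma(x))\bigr)$ and exploiting $d(x)+2\epsilon\le\tfrac{3h}{2}$; this transfers the impermeability condition through the double mollification and yields the cleaner bound $C\|u\|^3_{C^{0,\alpha}}h^\alpha$ with no $\epsilon^\alpha$ loss, showing the $\epsilon^\alpha$ in the proposition's statement is in fact dispensable. The price is that you cannot cite the lemma as a black box but must repeat its proof inline. One small point deserving care in your write-up: the identity $\nabla\cdot(\thh u)=\nabla\thh\cdot u$ is distributional, and since $\thh$ is only $C^1$ the function $\thh\psi$ is not an admissible test function for $\nabla\cdot u=0$; the paper closes this gap with the approximating sequence $\chi_k$ in \eqref{approximation}--\eqref{incompress}, and your "key algebraic observation" tacitly needs the same density argument.
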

\begin{proof}
Thanks for Proposition \ref{pressure10} one can write:
\begin{equation}
\begin{aligned}
& \la p, \nabla\cdot \Big(\thh((\thh u)^\epsilon)^\epsilon\Big)\ra= \int_\Omega p \, \nabla\cdot \Big(\thh((\thh u)^\epsilon)^\epsilon\Big) dx=J_{31} +J_{32}\\
&\hbox{ with}\quad  J_{31}  =\int_{\Omega}(p \, \thh )\nabla\cdot ((\thh u)^\epsilon)^\epsilon dx  \quad \hbox{ and }  J_{32} =\int_{\Omega}p \, (\nabla \thh)\cdot ( (\thh u)^\epsilon)^\epsilon dx \,.
\end{aligned}
\end{equation}
For the term $J_{31}$ one obtains the following sequence of equalities by  integration by parts and successive use of the fact that $\nabla_x \phi_\epsilon(x-y)=-\nabla_y \phi_\epsilon(x-y)$:
\begin{equation}\label{J31}
\begin{aligned}
&J_{31}=\int_{\Omega}\Big ( (p(x)\thh(x))\nabla_x \cdot\Big(\int_{\bR^n_y}\int_{\bR^n_z} u(z)\thh(z) \phi_\epsilon(z-y)\phi_\epsilon(x-y)dz dy\Big)\Big)dx\\
&=\int_{\Omega} \Big(p(x)\thh(x) \int_{\bR^n_z}\int_{\bR^n_y} u(z)\thh(z) \phi_\epsilon(z-y)\cdot \nabla_x \phi_\epsilon(x-y) dy dz\Big)dx\\
&=-\int_{\Omega} \Big( p(x)\thh(x) \int_{\bR^n_z}\int_{\bR^n_y} u(z)\thh(z) \phi_\epsilon(z-y)\cdot \nabla_y \phi_\epsilon(x-y) dy dz\Big)dx\\
&=\int_{\Omega}\Big ( p(x)\thh(x) \int_{\bR^n_z}\int_{\bR^n_y} u(z)\thh(z) \phi_\epsilon(x-y) \cdot \nabla_y \phi_\epsilon(z-y) dy dz\Big)dx\\
&=-\int_{\Omega} p(x)\thh(x)( \int_{\bR^n_z}\int_{\bR^n_y} u(z)\thh(z) \phi_\epsilon(x-y) \cdot \nabla_z \phi_\epsilon(z-y) dy dz)dx\\
&=-\int_{\Omega} \Big (p(x)\thh(x) \int_{\bR^n_y} \int_{\bR^n_z} u(z) \phi_\epsilon(x-y) \cdot \Big(\nabla_z \Big( \thh(z)\phi_\epsilon(z-y)\Big) -\phi_\epsilon(z-y)\nabla\thh(z)\Big) dzdy \Big)dx\,.
\end{aligned}
\end{equation}
Observe that for every fixed $y \in \bR^n$,  the  function $\thh(z)\phi_\epsilon(z-y)$, as a function of $z$, is compactly supported in $\Omega$, and that there exists a sequence  $\chi_k(\cdot,y)\in \mathcal D(\Omega)$,  $k=1,2,\cdots$, such that
\begin{equation}\label{approximation}
\lim_{k\to \infty} \|\chi_k(\cdot,y) - \thh(\cdot)\phi_\epsilon(\cdot-y)\|_{C^1(\Omega)}=0\,.
\end{equation}
Therefore,  since $\nabla \cdot u =0$ in $\mathcal{D}'(\Omega)$, one has:
\begin{equation}\label{div}
\int_{\bR^n_z} u(z) \cdot \nabla_z \chi_k(z,y)  dz =0\,.
\end{equation}
Thus, for every fixed $y\in \bR^n$, by virtue of   (\ref{approximation})  and the fact that $u\in C^{0,\alpha}(\Omega)$
one  infers from (\ref{div}), by letting $k \to \infty$\,, that:
\begin{equation}\label{incompress}
\int_{\bR^n_z} u(z) \cdot \nabla_z ( \thh(z)\phi_\epsilon(z-y)) dz =0\,.
\end{equation}
Hence, as a result of (\ref{J31}) and  (\ref{incompress}) one has:
\[
J_{31} =\int_{\Omega}\Big( p(x)\thh(x) \int_{\bR^n_y} \int_{\bR^n_z} \phi_\epsilon(x-y) \phi_\epsilon(z-y) u(z) \cdot \nabla \thh(z) dzdy \Big)dx\,.
\]
Consequently, by virtue of Lemma \ref{bdycon} one has
\begin{equation}
|J_{31}|\le C \|p\|_{L^\infty}\|u\|_{C^{0,\alpha} }h^\alpha\,.
\end{equation}
Concerning the term $J_{32}$ observe again that the support of $\nabla \thh$ is contained in $\overline{\Omega_h}$, therefore, one has :
\begin{equation}
\begin{aligned}
&J_{32}= \int_{\Omega_h} \Big (p(x)\nabla\thh(x) \cdot \int_{\bR^n_z}\int_{\bR^n_y} \thh(x -y+z) u(x -y+z)\phi_\epsilon(y)\phi_\epsilon (z) dydz \Big ) dx\\
&=\int_{\Omega_h}  p(x)\Big ( \int_{\bR^n_y} \int_{\bR^n_z}\phi_\epsilon(y)\phi_\epsilon (z)\thh(x-y+z)\Big (u(x-y+z) -u(x)\Big ) \cdot \nabla\thh(x)  dydz \Big ) dx\\
&+ \int_{\Omega_h} p(x)\Big ( \int_{\bR^n_y} \int_{\bR^n_z} \phi_\epsilon(y)\phi_\epsilon (z) u(x)\cdot \nabla\thh(x)  dydz \Big ) dx =: J_{321} + J_{322}\,.
\end{aligned}
\end{equation}
In order to estimate the term $J_{321}$, one observes that for the relevant $x,y,z$ for which the integrand in the definition of    $J_{321}$ is not zero one has
$|(u(x-y+z) -u(x))|\le C \|u\|_{C^{0,\alpha}} \epsilon^{\alpha}$, and that $\int_{\Omega_h}|\nabla\thh (x)|dx <C$. As a result one obtains:
\begin{equation}
|J_{321}| \le C \|p
\|_{L^\infty} \|u\|_{C^{0,\alpha}} \epsilon ^\alpha.
\end{equation}
As for estimating $J_{322}$, Lemma \ref{bdycon} is used to obtain:
\begin{equation}
|J_{322} |\le \int_{\Omega_h} |p(x)|\int_{\bR^n_y} \int_{\bR^n_z} |u(x)\cdot \nabla\thh(x)| \phi_\epsilon(y)\phi_\epsilon (z)  dydz dx \le C\|p(x)\|_{L^\infty}\|u(t)\|_{C^{0,\alpha}} h^\alpha \,. \label{imbound2}
 \end{equation}

\end{proof}

Now, we are ready to complete the proof of Theorem \ref{basic}.
Let us integrate equation (\ref{Eulerdistribution2}) over the interval $(t_1,t_2) \subset (0,T)$ to obtain
\begin{equation}
\int_{t_1} ^{t_2} \la u ,\del_t(\thh((\thh u)^\epsilon)^\epsilon) \ra dt =- \int_{t_1}^{t_2}\la u\otimes u , \nabla (\thh((\thh u)^\epsilon)^\epsilon)\ra dt-  \int_{t_1}^{t_2}\la p,\nabla (\thh((\thh u)^\epsilon)^\epsilon)\ra dt
\end{equation}
At this stage we choose  $\epsilon = o(h^{\frac{2}{1+\alpha}})$, and since $\alpha > \frac {1}{3}$, then Theorem \ref{basic} follows  from Propositions \ref{pressure10}, \ref{energy}, \ref{essential} and \ref{pressure1} by letting $h\to 0$.

\section{Remarks}
\begin{remark}\label{reynold}
 The proof of Proposition \ref{essential} is an adaptation, to domain with boundary, of  the main argument of \cite{CET}. The proof involves the expression
$$
 ( (u\otimes \thh u)^\epsilon -(u^\epsilon \otimes (\thh u)^\epsilon)
$$
which is reminiscent of the Reynolds stress tensor as it appears in statistical theory of turbulence or in the vanishing viscosity weak limit of solutions of the Navier-Stokes equations, according to the formula:
\begin{equation}
\overline {(u_\epsilon \otimes v_\epsilon)}- {\overline u_\epsilon \otimes \overline v_\epsilon}=\overline{(\overline {u_\epsilon}-u_\epsilon)\otimes (\overline {v_\epsilon}-v_\epsilon))}\,.
\end{equation}
However, in the present work the  localization and regularization do not exactly behave as an average and this is the reason for the presence (both in \cite{CET}  and in this work) of the term
\begin{equation}
J_{221}= \int_{\bR^n_x}\Big(\Big (\int_{\bR^n_y} (\delta_y u\otimes \delta _y (\thh u))(x)\phi_\epsilon(y)dy\Big ): \Big (\int_{\bR^n_z} \nabla \phi_\epsilon(z)\otimes(\thh u)(x-z)dz \Big ) \Big ) dx
\end{equation}
which has to be estimated.
\end{remark}
\begin{remark}
 As expected the  impermeability boundary condition   ($u\cdot \vec n=0$ on  $\del\Omega$) plays an essential role in the arguments presented in this work. It is the main hypothesis in Lemma \ref{bdycon}, which is then  used for the estimation of $J_{21}$, in  formula (\ref{impcon}), and in the estimation of the pressure contribution term in formula (\ref{imbound2}).
 \end{remark}
 \begin{remark}
 Besides corresponding to physical situations that appear in nature, the introduction of boundary and boundary conditions is a stimulus for the construction of a direct proof avoiding, for instance, the use of Besov space. However, the arguments presented in this work may well be adapted for proving similar results while replacing the H\"older spaces $C^{0,\alpha}$ by some  ``exotic" function spaces. Moreover, the ideas introduced in this article may be also well adapted to consider the Onsager's conjecture for compressible fluids in bounded domains, hence extending some preliminary results  of  \cite{Gwiazda_et_al, Yu} and references therein.
 \end{remark}



 \section*{Acknowledgements}
  The authors are thankful to the kind and warm hospitality of the ICERM, Brown University,  where this work was initiated. C.B.\ is also thankful to the generous hospitality of the Weizmann Institute of Science where this work was completed.   The work of E.S.T.\ was supported in part by the ONR grant N00014-15-1-2333.

\end{document}